\newcommand{\C}{\mathbb{C}}
\begin{document}

\markboth{Ji-Young Ham, Joongul Lee}
{Explicit formula for A-polynomial of $C(2n, 3)$}

\catchline{}{}{}{}{}

\title{An explicit formula for the $A$-polynomial of the knot with Conway's notation $C(2n, 3)$
}

\author{Ji-Young Ham, Joongul Lee
}

\address{Department of Science, Hongik University, 
94 Wausan-ro, Mapo-gu, Seoul,
04066,\\
 Department of Mathematical Sciences, Seoul National University, 
  1 Gwanak-ro, Gwanak-gu, Seoul 08826  \\
   Korea\\
   jiyoungham1@gmail.com}

\author{Joongul Lee
}

\address{Department of Mathematics Education, Hongik University, 
94 Wausan-ro, Mapo-gu, Seoul,
 04066\\
   Korea\\
   jglee@hongik.ac.kr}

\maketitle

\begin{abstract}
An explicit formula for the $A$-polynomial of the knot with Conway's notation $C(2n,3)$ is obtained from the explicit Riley-Mednykh polynomial of it. 
\end{abstract}

\keywords{$A$-polynomial, explicit formula, knot with Conway's notation $C(2n,3)$, Riley-Mednykh polynomial}

\ccode{Mathematics Subject Classification 2000: 57M27, 57M25}

\section{Introduction}
In 1994, the $A$-polynomial, $A(L,M)$, of a compact 3-manifold $N$ with a single torus boundary was introduced by Cooper, Culler, Gillet, Long, and Shalen 
in~\cite{CCGLS1}. It's variables are eigenvalues of the meridian and the longitude under the representations from $\pi_1{N}$ into 
$\text{\textnormal{SL}}(2,C)$. One of the main results of~\cite{CCGLS1} is ``boundary slopes are boundary slopes".  That is the boundary slope of the Newton polygon of $A(L,M)$ is the boundary slope of an incompressible surface in $N$. In 2001, it was shown that the Newton polygon of $A(L,M)$ is dual to the fundamental polygon of the Culler-Shalen seminorm~\cite{BZ1}. The Culler-Shalen seminorm~\cite{CGLS} can be used to detect and classify the exceptional surgeries, which is a step toward another proof of the Poincare conjecture. $A$-polynomial also encodes the deformed structure of $N$. For example, using the longitude $L$, in~\cite{HMP,HL1},  the volumes of the deformed cone-manifolds are computed and in~\cite{HL,HL2}, the Chern-Simons invariants~\cite{CS,HLM3} of the deformed orbifolds are computed. 
The non-commutative $A$-polynomial $A(L,M,q)$ of a knot is introduced and it is conjectured that $A(L,M,1)=B(M) A(L,M^{1/2})$ for some polynomial $B(M)$ of $M$ and the conjecture is called $AJ$ conjecture~\cite{Garou}. $AJ$ conjecture is proved for some knots. For example, our knot, the knot with Conway's notation $C(2n,3)$ satisfies the $AJ$ conjecture~\cite{LT}. If $AJ$ conjecture is true then the colored Jones polynomial detects knottedness~\cite{DG} as $A$-polynomial. 

With today's technology, $A$-polynomial is relatively difficult to compute. Recovering representations from a triangulation of $N$ and compute a factor of the 
$A$-polynomial is another try to compute it~\cite{Z2}. By one by one computation, $A$-polynomials are known up to $8$ crossings and most $9$ crossings and many $10$ crossings. For infinite families, recursively, $A$-polynomials are known for twist knots~\cite{HS}, $(-2,3,1+2n)$ pretzel knots~\cite{TY,GT}, $J[m,2n]$~\cite{HS} for $m$ between $2$ and $5$~\cite{Pe} and explicitly, for two-bridge torus knots~\cite{CCGLS1,HS}, iterated torus knots~\cite{Ni}, and for twist knots~\cite{Mat,HL}. We record here that $J[3,-2n]$ is the mirror image of $C(2n,3)$.
\medskip

The main purpose of the paper is to find the explicit formula for the $A$-polynomial of the knot with Conway's notation $C(2n,3)$. Let us denote the knot with Conway's notation $C(2n,3)$ by $T_{2n}$ and the $A$-polynomial of the knot with Conway's notation $C(2n,3)$ by $A_{2n}$. The following theorem gives the explicit formula for the $A$-polynomial of $T_{2n}$.

\begin{theorem}  \label{thm:A-polynomial}
A-polynomial $A_{2n}=A_{2n}(L,M)$ is given explicitly by 

\medskip
\begin{equation*}
A_{2n} = \begin{cases}
 \sum_{i=0}^{2n} 
\left(\begin{smallmatrix}
n+ \lfloor \frac{i}{2} \rfloor \\
i
\end{smallmatrix}\right)
 \left( \frac{(L M^{4 n}-1) (1-M^{2})}{1+L M^{2+4 n}} \right)^i 
 \left( \frac{1+ L M^{6+4 n}}{M^2+L M^{4+4 n}} \right)^{\lfloor \frac{1+i}{2}  \rfloor} \\
\times M^{-2 n} \left(1+L M^{2+4 n}\right)^{3 n} 
\qquad
\text{if $n \geq 0$} \\
 \sum_{i=0}^{-2n-1} 
 \left(\begin{smallmatrix}
-n+ \lfloor \frac{(i-1)}{2} \rfloor \\
i
\end{smallmatrix}\right)
 \left(\frac{(1-M^2) (M^{-4 n}-L)}{L M^2+M^{-4 n}}\right)^i  
 \left( \frac{L M^6+M^{-4 n}}{L M^4+M^{2-4 n}}\right)^{\lfloor \frac{1+i}{2}  \rfloor} \\
\times M^{8 n+6} \left(L M^2+M^{-4 n}\right)^{-3 n-1}  \qquad
\text{if $n<0$}
\end{cases}
\end{equation*}
\end{theorem}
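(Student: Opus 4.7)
The plan is to derive $A_{2n}$ from the explicit Riley-Mednykh polynomial of $T_{2n}$ flagged in the abstract. Fix a two-bridge presentation $\pi_1(S^3\setminus T_{2n})=\langle a,b\mid wa=bw\rangle$ with $a,b$ meridians, and consider non-abelian representations $\rho\colon\pi_1\to\mathrm{SL}(2,\C)$ in which $\rho(a)$ and $\rho(b)$ share eigenvalue $M$. Imposing the knot relation yields a polynomial equation $\phi_{2n}(M,y)=0$ in $M$ and one auxiliary parameter $y$, the Riley-Mednykh polynomial, whose explicit form for $C(2n,3)$ is assumed known from the cited background.

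First I would compute the image $\rho(\lambda)$ of the canonical longitude using the standard two-bridge longitude word, and extract its eigenvalue as a rational function $L=L(M,y)$. The relation $L=L(M,y)$ should invert in Möbius fashion to express $y$ as a ratio of linear expressions in $L$ with coefficients in $\Z[M,M^{-1}]$. Substituting this inverse into $\phi_{2n}(M,y)$ and clearing denominators then produces a Laurent polynomial in $L,M$ that, up to removable factors such as powers of $M$ and the abelian factor $L-1$ (corresponding to reducible representations), equals $A_{2n}$. The denominators $1+LM^{2+4n}$ and $M^2+LM^{4+4n}$ appearing in the stated formula are exactly what one should expect from such a Möbius inversion, and the overall prefactor $M^{-2n}(1+LM^{2+4n})^{3n}$ records the correct clearing-of-denominators normalization.

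Rather than pushing the elimination through in closed form for arbitrary $n$, I would set up an induction on $n$ exploiting the Chebyshev-type recursion satisfied by $\phi_{2n}$. Since $C(2n,3)$ and $C(2(n-1),3)$ differ by a full twist, $\phi_{2n}$ obeys a linear recurrence of length two in $n$, which transfers via the eigenvalue formulas for $L$ to a recurrence for $A_{2n}$. Substituting the proposed sum into this recurrence should reduce the statement to a Pascal-type binomial identity for $\binom{n+\lfloor i/2\rfloor}{i}$, combined with parity-dependent tracking of the exponent $\lfloor(1+i)/2\rfloor$ on the second fractional factor. The $n<0$ branch would follow either by a parallel induction downward from $n=-1$, or by invoking the mirror-image relation $J[3,-2n]=\overline{C(2n,3)}$ noted in the introduction, which acts on $A$-polynomials by $(L,M)\mapsto(L^{-1},M^{-1})$ up to units and precisely matches the reshuffled fractions in the second branch of the formula.

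The main obstacle I anticipate is the bookkeeping around extraneous factors in the elimination. Resultant computations for $A$-polynomials of two-bridge knots typically produce the genuine $A$-polynomial multiplied by predictable but nontrivial junk: powers of $M$, an abelian $L-1$ factor, and possibly a power of $(1+LM^{2+4n})$ arising from the denominators of $L(M,y)$. Matching these factors precisely, so that the inductive step preserves the proposed normalization and the final output is an honest Laurent polynomial with the correct Newton polygon, is the most delicate part. Once the normalization is locked in, the Chebyshev recurrence together with Pascal's rule applied separately in each parity of $i$ should close the induction cleanly for $n\geq 0$, and symmetry takes care of $n<0$.
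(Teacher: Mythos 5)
Your proposal follows essentially the same route as the paper: the explicit Riley--Mednykh polynomial is established by induction on the twist recurrence via Pascal's rule, the longitude eigenvalue relation is M\"obius-inverted to express the auxiliary parameter $x$ in terms of $L$ and $M$, and the substituted expression is normalized by an explicit power of $M$ and of $1+LM^{2+4n}$, with the $n<0$ branch handled by the parallel downward induction you mention as your first option. The only cosmetic differences are that the paper runs the Pascal induction on $P_{2n}$ before substitution (and again on the coefficients to check polynomiality and the absence of redundant $L$, $M$ factors) rather than on $A_{2n}$ afterward, and it never needs to discard a resultant's abelian $L-1$ factor since no resultant is taken.
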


\medskip 

One can consult~\cite{HL} for solving the recurrence formula. Our writing is parallel with that in~\cite{Mat} which is based on~\cite{HS}.
\section{Proof of Theorem~\ref{thm:A-polynomial}}

\begin{figure} 
\begin{center}
\resizebox{3.5cm}{!}{\includegraphics{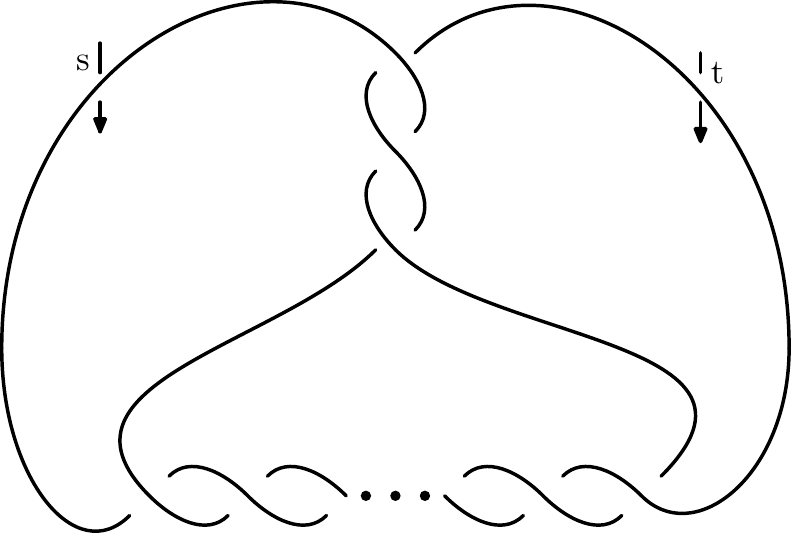}}
\qquad
\resizebox{3.5cm}{!}{\includegraphics{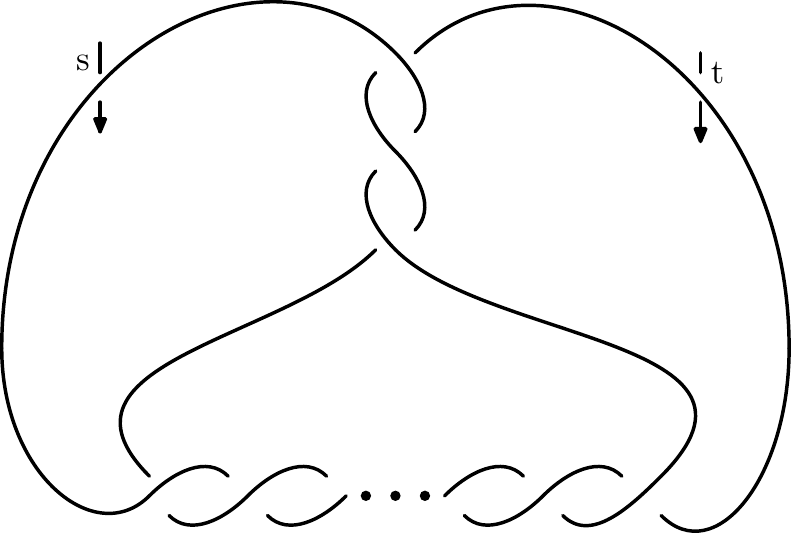}}
\caption{A two bridge knot with Conway's notation $C(2n,3)$  for $n>0$ (left) and for $n<0$ (right)} \label{fig:C[2n,3]}
\end{center}
\end{figure}

A knot $K$ is a two bridge knot with Conway's notation $C(2n,3)$ if $K$ has a regular two-dimensional projection of the form in Figure~\ref{fig:C[2n,3]}. Recall that we denote it by $T_{2 n}$. Let us denote the exterior of $T_{2n}$ by $X_{2 n}$. The following proposition gives the fundamental group of 
$X_{2n}$~\cite{HL1,HL2,HS,R1}.
\begin{proposition}\label{theorem:fundamentalGroup}
$$\pi_1(X_{2n})=\left\langle s,t \ |\ swt^{-1}w^{-1}=1\right\rangle,$$
where $w=(ts^{-1}tst^{-1}s)^n$.
\end{proposition}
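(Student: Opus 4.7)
The plan is to derive this presentation from the Wirtinger presentation of the diagrams in Figure~\ref{fig:C[2n,3]}. First I would label all arcs, designating the two overarcs meeting at the bridges as the generators $s$ and $t$. Writing the Wirtinger relation $c=aba^{-1}$ at each of the $2n+3$ crossings gives a presentation with $2n+3$ generators and $2n+3$ relations; iteratively eliminating every non-meridian generator by Tietze moves reduces this to a two-generator, one-relator presentation. Because $C(2n,3)$ is a two-bridge knot, general theory guarantees that the surviving relation has the form $sw=wt$ for some word $w$ in $s$ and $t$, which is exactly $swt^{-1}w^{-1}=1$.

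Next I would identify $w$ explicitly by tracking the conjugating word accumulated as one traverses the knot from one bridge across both tangles and back. The $3$-twist tangle contributes a length-six block whose letters alternate between powers of $t$ and $s$, with signs determined by the over/under crossing convention and by the orientation of the strands; this block works out to $ts^{-1}tst^{-1}s$. The outer $2n$-twist region then causes this block to be concatenated $n$ times before the word closes, giving $w=(ts^{-1}tst^{-1}s)^n$ for $n>0$. For $n<0$ the same calculation runs on the right-hand diagram of Figure~\ref{fig:C[2n,3]}, where the reversed crossings invert every letter of the block and leave the overall expression as the $n$th power of the same word, consistent with the stated formula.

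As a shortcut, the claim also follows from the standard Riley/Schubert presentation for two-bridge knots~\cite{R1}, applied to the two-bridge fraction $(6n+1)/3$ associated to the Conway notation $C(2n,3)$: writing out the Riley sign sequence for this fraction yields a periodic pattern of period $6$ whose fundamental block matches $ts^{-1}tst^{-1}s$.

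The main obstacle in either route is purely bookkeeping---matching conventions for crossing signs, strand orientations, and the direction of conjugation---since a wrong sign in a single factor propagates to the whole word. A small-$n$ sanity check (e.g.\ $n=1$, which gives the knot $5_2$) against a previously published presentation of that knot group would confirm the conventions have been applied correctly.
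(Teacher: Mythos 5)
The paper does not actually prove this proposition: it is stated with citations to \cite{HL1,HL2,HS,R1}, so there is no in-paper argument to compare against, and your outline is precisely the standard Schubert--Riley derivation used in those sources. It is correct in substance: $C(2n,3)$ is the two-bridge knot with fraction $(6n+1)/3$, and the general two-bridge presentation $\left\langle s,t \mid sw=wt \right\rangle$ with $w=t^{\epsilon_1}s^{\epsilon_2}\cdots t^{\epsilon_{p-2}}s^{\epsilon_{p-1}}$, $\epsilon_i=(-1)^{\lfloor iq/p\rfloor}$, applies; two consistency checks support the stated $w$, namely that its length is $6n=p-1$ for $p=6n+1$ and that the sign sequence is periodic of period $6$. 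The one convention point worth pinning down (it is exactly the bookkeeping hazard you flag) is the choice of $q$: with $q=3$ the sign block is $(+,+,-,-,+,+)$, which yields a word equivalent to but literally different from $ts^{-1}tst^{-1}s$; to reproduce the block $(+,-,+,+,-,+)$ on the nose one must use the representative $q\equiv 3^{-1} \equiv 4n+1 \pmod{6n+1}$ (e.g.\ $q=5$ for $n=1$, $q=9$ for $n=2$), which presents the same knot. With that choice your $n=1$ sanity check against $5_2$ goes through, and the $n<0$ case follows from the mirror diagram as you describe.
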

\medskip 

Given  a set of generators, $\{ s,t \}$, of the fundamental group for 
$\pi_1 (X_{2n})$, we define a representation $\rho \ : \ \pi_1 (X_{2n}) \rightarrow \text{SL}(2, \C)$ by

\begin{center}
$$\begin{array}{ccccc}
\rho(s)=\left[\begin{array}{cc}
                       M &       1 \\
                        0      & M^{-1}  
                     \end{array} \right]                          
\text{,} \ \ \
\rho(t)=\left[\begin{array}{cc}
                   M &  0      \\
                   2-M^2-M^{-2}-x      & M^{-1} 
                 \end{array}  \right].
\end{array}$$
\end{center}

Then $\rho$ can be identified with the point $(M,x) \in \C^2$.  When $M$ varies we have an algebraic set 
whose defining equation is the following explicit Riley-Mednykh polynomial.

\begin{lemma} \label{lem1:RMpolynomial}
$\rho$ is a representation of $\pi_1(X_{2n})$ if and only if $x$ is a root of the following Riley-Mednykh polynomial $P_{2n}=P_{2n}(x,M)$ which is given explicitly by 

\begin{equation*}
P_{2n} = \begin{cases}
 \sum_{i=0}^{2n} 
\left(\begin{smallmatrix}
n+ \lfloor \frac{i}{2} \rfloor \\
i
\end{smallmatrix}\right)
M^{4 n} \left(M^2+M^{-2}+x-1\right)^{i} 
\left(-x\right)^{\lfloor \frac{1+i}{2}  \rfloor}
\qquad
\text{if $n \geq 0$}, \\
 \sum_{i=0}^{-2n-1} 
\left(\begin{smallmatrix}
-n+ \lfloor \frac{(i-1)}{2} \rfloor \\
i
\end{smallmatrix}\right)
 M^{-4 n-2} \left(-M^2-M^{-2}-x+1 \right)^i 
 \left(-x \right)^{\lfloor \frac{1+i}{2}  \rfloor}
\text{if $n<0$}.
\end{cases}
\end{equation*}
\end{lemma}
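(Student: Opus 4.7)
The plan is to reduce the representation condition to the defining relation $swt^{-1}w^{-1}=1$. Since $\rho(s),\rho(t)\in\mathrm{SL}(2,\mathbb{C})$ with the same diagonal $(M,M^{-1})$, $\rho$ extends to a homomorphism exactly when $\rho(s)\rho(w)=\rho(w)\rho(t)$. Setting $W=\rho(ts^{-1}tst^{-1}s)$, this becomes $\rho(s)W^n=W^n\rho(t)$ for $n>0$, and the analogous identity with $W^{-|n|}$ for $n<0$. The first concrete step is to compute $W$ as an explicit $2\times 2$ matrix in $M$ and $x$, together with its trace $\tau=\mathrm{tr}(W)$; note $\det W=1$ automatically.

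Since $W\in\mathrm{SL}(2,\mathbb{C})$, Cayley--Hamilton gives $W^2=\tau W-I$, hence $W^n=S_{n-1}(\tau)W-S_{n-2}(\tau)I$ where $S_k$ satisfies the three-term recurrence $S_{k+1}=\tau S_k-S_{k-1}$ with $S_0=1$, $S_{-1}=0$. Substituting this into $\rho(s)W^n-W^n\rho(t)=0$ and extracting one entry (the other entries yield proportional conditions for Riley's parametrization, as is standard) reduces the matrix equation to a single polynomial equation in $x,M$. Calling this polynomial $\widetilde{P}_{2n}$, the representation condition is $\widetilde{P}_{2n}(x,M)=0$, and what remains is to identify $\widetilde{P}_{2n}$ with the stated $P_{2n}$ up to a nonvanishing factor.

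To obtain the closed form, I would proceed by induction on $n$. The three-term recurrence for $S_k$ translates into a three-term recurrence for $\widetilde{P}_{2n}$ in $n$, and I would verify that the proposed sum satisfies this same recurrence with matching values at $n=0$ and $n=1$. The driving combinatorial identity is a Pascal-type relation of the form
\begin{equation*}
\binom{n+1+\lfloor i/2\rfloor}{i}=\binom{n+\lfloor i/2\rfloor}{i}+\binom{n+\lfloor (i-1)/2\rfloor}{i-1},
\end{equation*}
which naturally splits the sum into its even-$i$ and odd-$i$ sub-sums; this parity split is precisely what the $\lfloor i/2\rfloor$ and $\lfloor(i+1)/2\rfloor$ floors encode, with one parity contributing a further $(-x)$ factor while the other does not.

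The main obstacle I anticipate is exactly this floor-function bookkeeping: because the recurrence shifts $i$ by one, it mixes the two parity classes, so the inductive step must be verified separately on even and odd indices and then reassembled. The case $n<0$ is handled analogously using $W^{-1}$ in place of $W$; since $W^{-1}$ has the same trace as $W$ but swaps the roles of certain off-diagonal factors, this shifts the exponent of $M$ and introduces the sign change $M^2+M^{-2}+x-1\leadsto -M^2-M^{-2}-x+1$ visible in the negative-$n$ formula, together with the prefactor $M^{-4n-2}$. Once both parity recurrences are confirmed in the positive case and the inversion step is carried out for the negative case, the two branches of the lemma follow in parallel.
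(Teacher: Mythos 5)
Your overall strategy is sound and, in its main step, coincides with the paper's: the paper also establishes the closed form by checking initial values and a three\-/term recurrence in $n$ via Pascal's rule. The difference is at the front end. The paper does not rederive the representation condition from $\rho(s)\rho(w)=\rho(w)\rho(t)$; it imports from \cite{HL1} the recursion $P_{2n}=QP_{2(n-1)}-M^{8}P_{2(n-2)}$ (and its mirror for $n<-1$) together with the initial polynomials $P_{-2},P_{0},P_{2}$, where $Q=M^{4}\bigl(-x(M^{2}+M^{-2}+x-1)^{2}+2\bigr)$. Your Cayley--Hamilton derivation is the natural source of that recursion (after factoring out $M^{4n}$ it is exactly $\hat P_{2n}=\tau\hat P_{2(n-1)}-\hat P_{2(n-2)}$ with $\tau=\mathrm{tr}\,W$), so your route is more self-contained; but for the ``if'' direction you still owe the argument that the single entry you extract from $\rho(s)W^{n}-W^{n}\rho(t)$ controls the remaining entries --- standard for Riley's parametrization of two-bridge groups, but not automatic.

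The concrete problem is the displayed combinatorial identity. As written,
$\binom{n+1+\lfloor i/2\rfloor}{i}=\binom{n+\lfloor i/2\rfloor}{i}+\binom{n+\lfloor (i-1)/2\rfloor}{i-1}$
is false for even $i$ (take $n=1$, $i=2$: the left side is $3$, the right side is $1+1=2$), and it relates only two consecutive values of $n$, whereas the recurrence you correctly predict is second order in $n$ and shifts $i$ by two, since $\tau$ is quadratic in $M^{2}+M^{-2}+x-1$. The identity actually needed is
$\binom{n+\lfloor i/2\rfloor}{i}=\binom{n-2+\lfloor i/2\rfloor}{i-2}+2\binom{n-1+\lfloor i/2\rfloor}{i}-\binom{n-2+\lfloor i/2\rfloor}{i}$,
obtained by applying Pascal's rule three times; and because $\lfloor (i+2)/2\rfloor=\lfloor i/2\rfloor+1$ and $\lfloor (1+(i+2))/2\rfloor=\lfloor (1+i)/2\rfloor+1$ hold for every $i$, the reindexing $i\mapsto i+2$ is uniform and no even/odd case split is required --- the parity bookkeeping you flag as the main obstacle dissolves. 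With that replacement (and the analogous identity with $\lfloor (i-1)/2\rfloor$ in the $n<0$ branch), your induction goes through and reproduces the paper's argument.
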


\begin{proof}
In~\cite{HL1}, $P_{2n}$ is give by the following recursive formula.
\begin{equation*}
P_{2n} = \begin{cases}
 Q P_{2(n-1)} -M^8 P_{2(n-2)} \ \text{if $n>1$}, \\
 Q P_{2(n+1)}-M^8 P_{2(n+2)} \ \text{if $n<-1$},
\end{cases}
\end{equation*}
\medskip

\noindent with initial conditions
\begin{align*}
& P_{-2}   =M^2 x^2+\left(M^4- M^2+1\right) x+M^2,\\
& P_{0}  =M^{-2} \ \text{\textnormal{for}} \ n < 0 \qquad \text{\textnormal
{and}} \qquad P_{0}  =1 \ \text{\textnormal{for}} \ n > 0,\\    
& P_{2}  =-M^4 x^3+\left(-2M^6+M^4-2 M^2\right) x^2+\left(-M^8+M^6-2 M^4+ M^2-1\right) x+M^4, \\
\end{align*}
\noindent and
$$Q=-M^4 x^3+\left(-2M^6+2 M^4-2M^2\right) x^2+\left(-M^8+2M^6-3M^4+2M^2-1\right) x+2M^4.$$

\medskip

We write $f_{2n}$ for the claimed formula and show that $f_{2n}=P_{2n}$. 

\noindent Case I: $n \geq 0$. When $i > 2n$ or $i<0$, 
$\left(\begin{smallmatrix}
n+ \lfloor \frac{i}{2} \rfloor \\
i
\end{smallmatrix}\right)$
is undefined and can be considered as zero. Hence the finite sum can be regarded as an infinite sum.
Direct computation shows that $f_0=P_0$ and $f_2=P_2$.
Now, we only need to show that $f_{2n}$ satisfies the recursive relation.
Note that $Q$ can be written as $M^4 \left(-x \left( M^2+M^{-2}+x-1\right)^2+2 \right)$.
\begin{align*}
&Q f_{2(n-1)} -M^8 f_{2(n-2)} \\
&=
M^4 \left( -x \left( M^2+M^{-2}+x-1\right)^2+2\right) \\
& \times  \sum_i
\left(\begin{smallmatrix}
n-1+ \lfloor \frac{i}{2} \rfloor \\
i
\end{smallmatrix}\right)
M^{4 n-4} \left(M^2+M^{-2}+x-1\right)^{i} 
\left(-x\right)^{\lfloor \frac{1+i}{2}  \rfloor}\\
&-M^8
\sum_i
\left(\begin{smallmatrix}
n-2+ \lfloor \frac{i}{2} \rfloor \\
i
\end{smallmatrix}\right)
M^{4 n-8} \left(M^2+M^{-2}+x-1\right)^{i} 
\left(-x\right)^{\lfloor \frac{1+i}{2}  \rfloor} \\
& = \sum_i \left[
\left(\begin{smallmatrix}
n-2+ \lfloor \frac{i}{2} \rfloor \\
i-2
\end{smallmatrix}\right)
+2 
\left(\begin{smallmatrix}
n-1+ \lfloor \frac{i}{2} \rfloor \\
i
\end{smallmatrix}\right)
-\left(\begin{smallmatrix}
n-2+ \lfloor \frac{i}{2} \rfloor \\
i
\end{smallmatrix}\right) \right] \\
& \times M^{4 n} \left(M^2+M^{-2}+x-1\right)^{i} \left(-x\right)^{\lfloor \frac{1+i}{2}  \rfloor} \\
&= \sum_i
\left(\begin{smallmatrix}
n+ \lfloor \frac{i}{2} \rfloor \\
i
\end{smallmatrix}\right)
M^{4 n} \left(M^2+M^{-2}+x-1\right)^{i} 
\left(-x\right)^{\lfloor \frac{1+i}{2}  \rfloor} \\
&=f_{2n}
\end{align*}
In the last equality we use the binomial relation 
$$\left(\begin{smallmatrix}
a \\
b
\end{smallmatrix}\right)
=
\left(\begin{smallmatrix}
a-1\\
b-1
\end{smallmatrix}\right)
+
\left(\begin{smallmatrix}
a-1 \\
b
\end{smallmatrix}\right)$$
three times.

\noindent Case II: $n < 0$. When $i > -2n-1$ or $i<0$, 
$\left(\begin{smallmatrix}
-n+ \lfloor \frac{i-1}{2} \rfloor \\
i
\end{smallmatrix}\right)$
is undefined and can be considered as zero. Hence the finite sum can be regarded as an infinite sum.
Direct computation shows that $f_0=P_0$ and $f_{-2}=P_{-2}$.
Now, we only need to show that $f_{2n}$ satisfies the recursive relation.
We know that $Q$ can be written as $M^4 \left(-x \left( M^2+M^{-2}+x-1\right)^2+2 \right)$.
\begin{align*}
&Q f_{2(n+1)} -M^8 f_{2(n+2)} \\
&=
M^4 \left( -x \left( M^2+M^{-2}+x-1\right)^2+2\right) \\
& \times  \sum_i
\left(\begin{smallmatrix}
-n-1+ \lfloor \frac{(i-1)}{2} \rfloor \\
i
\end{smallmatrix}\right)
M^{-4 n-6} \left(M^2+M^{-2}+x-1\right)^{i} 
\left(-x\right)^{\lfloor \frac{1+i}{2}  \rfloor}\\
&-M^8
\sum_i
\left(\begin{smallmatrix}
-n-2+ \lfloor \frac{i-1}{2} \rfloor \\
i
\end{smallmatrix}\right)
M^{-4 n-10} \left(M^2+M^{-2}+x-1\right)^{i} 
\left(-x\right)^{\lfloor \frac{1+i}{2}  \rfloor} \\
& = \sum_i \left[
\left(\begin{smallmatrix}
-n-2+ \lfloor \frac{i-1}{2} \rfloor \\
i-2
\end{smallmatrix}\right)
+2 
\left(\begin{smallmatrix}
-n-1+ \lfloor \frac{i-1}{2} \rfloor \\
i
\end{smallmatrix}\right)
-\left(\begin{smallmatrix}
-n-2+ \lfloor \frac{i-1}{2} \rfloor \\
i
\end{smallmatrix}\right) \right] \\
&\times M^{-4 n-2} \left(M^2+M^{-2}+x-1\right)^{i} \left(-x\right)^{\lfloor \frac{1+i}{2}  \rfloor} \\
&= \sum_i
\left(\begin{smallmatrix}
-n+ \lfloor \frac{i-1}{2} \rfloor \\
i
\end{smallmatrix}\right)
M^{-4 n-2} \left(M^2+M^{-2}+x-1\right)^{i} 
\left(-x\right)^{\lfloor \frac{1+i}{2}  \rfloor} \\
&=f_{2n}
\end{align*}
In the last equality we use the binomial relation 
$$\left(\begin{smallmatrix}
a \\
b
\end{smallmatrix}\right)
=
\left(\begin{smallmatrix}
a-1\\
b-1
\end{smallmatrix}\right)
+
\left(\begin{smallmatrix}
a-1 \\
b
\end{smallmatrix}\right)$$
three times, again.
\end{proof}

   Let $l = ww^{*}s^{-4n}$~\cite{CCGLS1,HS}, where $w^{*}$ is the word obtained by reversing $w$. Let $L=\rho(l)_{11}$. Then $l$ is the longitude which is null-homologus in $X_{2n}$ (you can read a twisted longitude $ww^{*}$ from the Schubert normal form of the knot $C(2n,3)$ and multiply it by $s^{-4n}$ so that the exponent sum of $l$ becomes $0$). And we have
   
\medskip 
   
\begin{lemma}~\cite{HL1,HL2}
\label{lem2:longitude}
\begin{align*}
L &=-M^{-4n-2}\frac{M^{-2}+x}{ M^{2}+x}, \\
x &=-\frac{1+L M^{6 + 4 n}}{M^2 (1 + L M^{2 + 4 n})}.
\end{align*}
\end{lemma}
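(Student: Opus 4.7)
My plan is to compute $L = \rho(l)_{11}$ directly from the matrix product $\rho(ww^*s^{-4n})$ using the explicit matrices $\rho(s)$ and $\rho(t)$, and then obtain the formula for $x$ by algebraic inversion. A useful preliminary observation is that the longitude $l$ commutes with the meridian $s$ in $\pi_1(X_{2n})$, so $\rho(l)$ automatically has upper-triangular form $\left(\begin{smallmatrix} L & * \\ 0 & L^{-1} \end{smallmatrix}\right)$; this reduces the task to computing a single matrix entry.

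First I would set $u = ts^{-1}tst^{-1}s$ and compute $U = \rho(u)$ as an explicit $2 \times 2$ matrix with entries in $\C[M^{\pm 1}, x]$. Since $w = u^n$ and $\det U = 1$, Cayley--Hamilton gives $U^n = S_{n-1}(\tau)\, U - S_{n-2}(\tau)\, I$, where $\tau = \operatorname{tr} U$ and the $S_k$ are Chebyshev polynomials of the second kind, satisfying $S_k = \tau S_{k-1} - S_{k-2}$ with $S_0 = 1$, $S_{-1} = 0$. Similarly $\rho(w^*) = (U^*)^n$ with $U^* = \rho(u^*)$, $u^* = st^{-1}sts^{-1}t$; note that $\operatorname{tr}(U^*) = \tau$ since word reversal preserves trace. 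Multiplying $\rho(w)\rho(w^*)\rho(s)^{-4n}$ and reading off the $(1,1)$ entry expresses $L$ as a rational function in $M$, $x$, $S_{n-1}(\tau)$ and $S_{n-2}(\tau)$.

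The heart of the argument is showing this expression collapses to $-M^{-4n-2}(M^{-2}+x)/(M^2+x)$; this is where I expect the main difficulty, since the Chebyshev values depend on $n$ in a non-obvious way while the right-hand side has only the simple factor $M^{-4n-2}$. The cleanest route is likely induction on $|n|$: verify the formula for $n = 0, \pm 1$ by a direct small matrix computation, then use the two-term Cayley--Hamilton recursion $U^n = \tau U^{n-1} - U^{n-2}$ to derive a matching recursion for $\rho(w)$ and hence for $L_{2n}$, invoking the Riley--Mednykh relation $P_{2n}(x,M) = 0$ from Lemma~\ref{lem1:RMpolynomial} only as needed to reduce high powers of $x$. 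Once the first formula is established, the second is immediate algebra: from $L(M^2+x) = -M^{-4n-2}(M^{-2}+x)$ one obtains $(L + M^{-4n-2})x = -(LM^2 + M^{-4n-4})$, and multiplying numerator and denominator by $M^{4n+4}$ yields $x = -(1+LM^{6+4n})/(M^2(1+LM^{2+4n}))$, as claimed.
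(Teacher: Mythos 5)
First, a point of reference: the paper does not prove this lemma at all --- it is imported verbatim from~\cite{HL1,HL2} --- so there is no internal argument to match, and any proof you supply is necessarily ``extra''. Judged on its own terms, the outer layers of your proposal are fine: since $\rho(s)^{-4n}$ is upper triangular, $\rho(l)_{11}=M^{-4n}\bigl(\rho(w)\rho(w^*)\bigr)_{11}$ (you do not even need the commutation of $l$ with $s$ for this entry; that observation is only needed to interpret $\rho(l)_{11}$ as the eigenvalue $L$), and your passage from the first displayed equation to the second is correct and complete algebra. The difficulty is that the entire content of the lemma lives in the step you explicitly defer, namely that $M^{-4n}\bigl(\rho(w)\rho(w^*)\bigr)_{11}$ collapses to $-M^{-4n-2}(M^{-2}+x)/(M^2+x)$, and the strategy you sketch for it does not work as stated.

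Concretely: $\bigl(\rho(w)\rho(w^*)\bigr)_{11}$ is a polynomial in $x$ and $M^{\pm1}$, while the right-hand side has a pole at $x=-M^2$, so the asserted equality can only hold modulo $P_{2n}(x,M)=0$. This breaks your induction in two ways. The inductive hypothesis for $n-1$ is a congruence modulo $P_{2(n-1)}$, which is a different (generically coprime) polynomial from $P_{2n}$, so it cannot be invoked inside a step whose conclusion is a congruence modulo $P_{2n}$; and the Cayley--Hamilton recursion $U^n=\tau U^{n-1}-U^{n-2}$ does not close on the quantities $\rho(w)\rho(w^*)=U^m(U^*)^m$, since expanding produces cross terms such as $U^{n-1}(U^*)^{n-2}$ that are not of the inductive form. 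The standard route --- the one used in~\cite{HS} for twist knots and in~\cite{HL1,HL2} for this family --- avoids computing $U^n$ altogether: the relator $swt^{-1}w^{-1}=1$ gives $\rho(s)\rho(w)=\rho(w)\rho(t)$, whose entries yield the linear relations $w_{21}=v\,w_{12}$ and $w_{22}=(M^{-1}-M)\,w_{12}$ with $v=2-M^2-M^{-2}-x$; combined with $\det\rho(w)=1$ and the word-reversal symmetry relating $\rho(w^*)$ to $\rho(w)$, this expresses $\bigl(\rho(w)\rho(w^*)\bigr)_{11}$ in closed form uniformly in $n$ (the only $n$-dependence entering through the factor $M^{-4n}$ from $s^{-4n}$), with no induction and no Chebyshev polynomials. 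I would recommend replacing your inductive scheme by this relator-based computation, or simply citing~\cite{HL1,HL2} as the paper does.
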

Now substituting $-\frac{1+L M^{6 + 4 n}}{M^2 (1 + L M^{2 + 4 n})}$ for $x$ into $P_{2n}$, for $n \geq 0$, gives

\begin{align*}
\sum_{i=0}^{2n} 
\left(\begin{smallmatrix}
n+ \lfloor \frac{i}{2} \rfloor \\
i
\end{smallmatrix}\right)
&M^{4 n} \left(M^2+M^{-2}-\frac{1+L M^{6 + 4 n}}{M^2 (1 + L M^{2 + 4 n})}-1\right)^{i} \\
& \times \left(\frac{1+L M^{6 + 4 n}}{M^2 (1 + L M^{2 + 4 n})}\right)^{\lfloor \frac{1+i}{2}  \rfloor}.
\end{align*}

We observe that 
\begin{align*}
M^2+M^{-2}-\frac{1+L M^{6 + 4 n}}{M^2 (1 + L M^{2 + 4 n})}-1=\frac{(L M^{4 n}-1) (1-M^{2})}{(1 + L M^{2 + 4 n})}.
\end{align*}

The resulting expression,

\begin{align*}
 \sum_{i=0}^{2n} 
\left(\begin{smallmatrix}
n+ \lfloor \frac{i}{2} \rfloor \\
i
\end{smallmatrix}\right)
M^{4 n} \left( \frac{(L M^{4 n}-1) (1-M^{2})}{1+L M^{2+4 n}} \right)^i 
 \left( \frac{1+ L M^{6+4 n}}{M^2+L M^{4+4 n}} \right)^{\lfloor \frac{1+i}{2}  \rfloor},
\end{align*}
once denominators are cleared and some power of $M$ is factored out to give a polynomial, gives the A-polynomial $A_{2n}(L,M)$.
We multiply it by 
$M^{-6n} (1 + L M^{2 + 4 n})^{3 n}$ so that we have the claimed formula in Theorem~\ref{thm:A-polynomial}.
The following equality, which guarantees that the claimed formula has the constant term $1$ and is a polynomial, 
\begin{align*}
c_{2n}=\sum_{i=0}^{2n} 
\left(\begin{smallmatrix}
n+ \lfloor \frac{i}{2} \rfloor \\
i
\end{smallmatrix}\right)
 \left(M^{2}-1\right)^i \left( \frac{1}{M^2} \right)^{\lfloor \frac{1+i}{2}  \rfloor}  M^{-2 n} =1,
 \end{align*}
 can be proved by induction,
 \begin{align*}
 c_{2n}= \frac{(M^2-1)^2 c_{2(n-1)}}{M^4}+\frac{2 c_{2(n-1)}}{M^2}-\frac{c_{2(n-2)}}{M^4},
 \end{align*}
 which is proved in the following.
 As in the case of the proof of Lemma~\ref{lem1:RMpolynomial}, $c_{2n}$ can be regarded as an infinite sum. Direct computation shows that  $f_{2}=c_{2}$ and $f_{4}=c_{4}$.
 Let $f_{2n}$ be the right side of the claimed formula. We will show that $f_{2n}=c_{2n}$.
 Again as in the case of the proof of Lemma~\ref{lem1:RMpolynomial}, $f_{2n}$ can be written as 
 \begin{align*}
&\sum_i \left[\binom{n-2+\lfloor \frac{i}{2}  \rfloor}{i-2}+2 \binom{n-1+\lfloor \frac{i}{2}  \rfloor}{i}-\binom{n-2+\lfloor \frac{i}{2}  \rfloor}{i}\right]\\
& \times \left(M^{2}-1\right)^i \left( \frac{1}{M^2} \right)^{\lfloor \frac{1+i}{2}  \rfloor}  M^{-2 n}.
 \end{align*}
 Hence as in the case of the proof of Lemma~\ref{lem1:RMpolynomial}, by using the binomial relations three times, we have $f_{2n}=c_{2n}$.

Similarly, for $n < 0$, substituting 
$$-\frac{1+L M^{6 + 4 n}}{M^2 (1 + L M^{2 + 4 n})}=-\frac{M^{-4n}+L M^{6}}{M^2 (M^{-4n} + L M^{2})}$$
 for $x$ into $P_{2n}$ gives

\begin{align*}
 \sum_{i=0}^{-2n-1} 
\left(\begin{smallmatrix}
-n+ \lfloor \frac{(i-1)}{2} \rfloor \\
i
\end{smallmatrix}\right)
& M^{-4 n-2} \left(-M^2-M^{-2}+\frac{M^{-4n}+L M^{6}}{M^2 (M^{-4n} + L M^{2})}+1 \right)^i \\
& \times \left(\frac{M^{-4n}+L M^{6}}{M^2 (M^{-4n} + L M^{2})}\right)^{\lfloor \frac{1+i}{2}  \rfloor}.
\end{align*}

We observe that 
\begin{align*}
-M^2-M^{-2}+\frac{M^{-4n}+L M^{6}}{M^2 (M^{-4n} + L M^{2})}+1=\frac{(M^{-4 n}-L) (1-M^{2})}{(M^{-4 n} + L M^{2})}.
\end{align*}

The resulting expression,

\begin{align*}
 \sum_{i=0}^{-2n-1} 
 \left(\begin{smallmatrix}
-n+ \lfloor \frac{(i-1)}{2} \rfloor \\
i
\end{smallmatrix}\right)
 M^{-4 n-2}  \left(\frac{(1-M^2) (M^{-4 n}-L)}{L M^2+M^{-4 n}}\right)^i  
 \left( \frac{L M^6+M^{-4 n}}{L M^4+M^{2-4 n}}\right)^{\lfloor \frac{1+i}{2}  \rfloor},
\end{align*}
once denominators are cleared and some power of $M$ is factored out to give a polynomial, gives the A-polynomial $A_{2n}(L,M)$.
We multiply it by $M^{12 n+8} \left(L M^2+M^{-4 n}\right)^{-3 n-1}$ so that we have the claimed formula in Theorem~\ref{thm:A-polynomial}:

\begin{align*}
&\sum_{i=0}^{-2n-1} 
 \left(\begin{smallmatrix}
-n+ \lfloor \frac{(i-1)}{2} \rfloor \\
i
\end{smallmatrix}\right)
 \left(\frac{(1-M^2) (M^{-4 n}-L)}{M^2 (L+M^{-4 n-2})}\right)^i  
 \left( \frac{M^2 (L +M^{-4 n-6})}{L+M^{-4 n-2}}\right)^{\lfloor \frac{1+i}{2}  \rfloor} \\
&\times M^{2 n+4} \left(L +M^{-4 n-2}\right)^{-3 n-1}.
\end{align*}

\noindent Now we want to show that the claimed formula does not have fractions. For $n=-1$, by direct computation, one can show that the claimed formula does not have fractions. For each $n <-1$, fractions can only occur in the following sums.
\begin{align*}
& \sum_{i=0}^{-2n-1} 
 \left(\begin{smallmatrix}
-n+ \lfloor \frac{(i-1)}{2} \rfloor \\
i
\end{smallmatrix}\right)
 \left((1-M^2) (-L)\right)^i  
 L^{\lfloor \frac{1+i}{2}  \rfloor} \\
& \times M^{2 n+4-2 i+2 \lfloor \frac{1+i}{2}  \rfloor} L^{-3 n-1-i-\lfloor \frac{1+i}{2}  \rfloor} \\
&=\sum_{i=0}^{-2n-1} 
 \left(\begin{smallmatrix}
-n+ \lfloor \frac{(i-1)}{2} \rfloor \\
i
\end{smallmatrix}\right)
(M^2-1) ^i  
 M^{2 n+4-2 i+2 \lfloor \frac{1+i}{2}\rfloor}   L^{-3 n-1} .
\end{align*}
\noindent Let $c_{2n}$ be the coefficient of $L^{-3 n-1}$ of the above sum. Then,
 one can prove that $c_{2n}=M^{4}$
by the following recurrence relation,
 \begin{align*}
 c_{2n}= \frac{(M^2-1)^2 c_{2(n+1)}}{M^4}+\frac{2 c_{2(n+1)}}{M^2}-\frac{c_{2(n+2)}}{M^4},
 \end{align*}
 which can be proved as in the case of $n>0$.
 
\noindent Now, we are going to compute a part of the coefficient of $L^{-3n-2}$.
 For each $n<0$, the term $-L^{-3n-2}$ exists:
 \begin{align*}
 & \sum_{i=-2n-2}^{-2n-1} 
 \binom{-n+ \lfloor \frac{(i-1)}{2} \rfloor}{i} 
 M^{2 n+4-2 i+2 \lfloor \frac{1+i}{2}\rfloor}\\
 & \times \left(-L\right)^i  \left( \lfloor \frac{1+i}{2}  \rfloor L^{\lfloor \frac{1+i}{2}  \rfloor-1}M^{-4n-6}\right)
  L^{-3 n-1-i-\lfloor \frac{1+i}{2}  \rfloor}\\
  &= \sum_{i=-2n-2}^{-2n-1}  \lfloor \frac{1+i}{2}  \rfloor L^{-3n-2}.
 \end{align*}
\noindent And now we are going to compute a part of the coefficient of $L^{0}$. For each $n<0$, the term
$M^{12 n^2+14 n+6}$ exists:
 \begin{align*}
 & \sum_{i=-2n-1}^{-2n-1} 
 \binom{-n+ \lfloor \frac{(i-1)}{2} \rfloor}{i} 
 M^{2 n+4-2 i+2 \lfloor \frac{1+i}{2}\rfloor}\\
 & \times \left(M^{-4 n}\right)^i \left(M^{-4 n-6}\right)^{\lfloor \frac{1+i}{2}\rfloor}  (M^{-4 n-2})^{-3 n-1-i-\lfloor \frac{1+i}{2}  \rfloor}\\
 &=\sum_{i=-2n-1}^{-2n-1} M^{12 n^2+12 n+6-2 \lfloor \frac{1+i}{2}  \rfloor}.
 \end{align*}
\noindent Hence there does not exist redundant $L$ or $M$ factors.
\medskip

\section*{Acknowledgments}
The authors would like to thank Hyuk Kim, Yi Ni, Daniel Mathews, and anonymous referees.
The first author was supported by Basic Science Research Program through the National Research Foundation of Korea (NRF) funded by the Ministry of Education, Science and Technology (No. NRF-2008-341-C00004 and NRF-R01-2008-000-10052-0). The second author was supported by 2016 Hongik University Research Fund.


\begin{thebibliography}{99}
\bibitem{BZ1}
Steven Boyer and Xingru Zhang.
\newblock A proof of the finite filling conjecture.
\newblock {\em J. Differential Geom.}, 59(1):87--176, 2001.

\bibitem{CS}
Shiing-shen Chern and James Simons.
\newblock Some cohomology classes in principal fiber bundles and their
  application to {R}iemannian geometry.
\newblock {\em Proc. Nat. Acad. Sci. U.S.A.}, 68:791--794, 1971.

\bibitem{CCGLS1}
D.~Cooper, M.~Culler, H.~Gillet, D.~D. Long, and P.~B. Shalen.
\newblock Plane curves associated to character varieties of {$3$}-manifolds.
\newblock {\em Invent. Math.}, 118(1):47--84, 1994.

\bibitem{CGLS}
Marc Culler, C.~McA. Gordon, J.~Luecke, and Peter~B. Shalen.
\newblock Dehn surgery on knots.
\newblock {\em Ann. of Math. (2)}, 125(2):237--300, 1987.

\bibitem{DG}
Nathan~M. Dunfield and Stavros Garoufalidis.
\newblock Non-triviality of the {$A$}-polynomial for knots in {$S^3$}.
\newblock {\em Algebr. Geom. Topol.}, 4:1145--1153 (electronic), 2004.

\bibitem{Garou}
Stavros Garoufalidis.
\newblock Knots and tropical curves.
\newblock In {\em Interactions between hyperbolic geometry, quantum topology
  and number theory}, volume 541 of {\em Contemp. Math.}, pages 83--101. Amer.
  Math. Soc., Providence, RI, 2011.

\bibitem{GT}
Stavros Garoufalidis and Thomas~W. Mattman.
\newblock The {$A$}-polynomial of the {$(-2,3,3+2n)$} pretzel knots.
\newblock {\em New York J. Math.}, 17:269--279, 2011.

\bibitem{HL}
Ji-Young Ham and Joongul Lee.
\newblock Explicit formulae for {C}hern-{S}imons invariants of the twist knot
  orbifolds and edge polynomials of twist knots.
\newblock To appear in {\em Matematicheskii Sbornik}.

\bibitem{HL1}
Ji-Young Ham and Joongul Lee.
\newblock The volume of hyperbolic cone-manifolds of the knot with {C}onway's
  notation ${C}(2n, 3)$.
\newblock {\em J. Knot Theory Ramifications}, 25(6):1650030, 9, 2016.

\bibitem{HL2}
Ji-Young Ham and Joongul Lee.
\newblock Explicit formulae for {C}hern-{S}imons invariants of the hyperbolic
  orbifolds of the knot with {C}onway's notation ${C}(2n,3)$.
\newblock \url{arXiv:1601.00723}, 2016.
\newblock Preprint.

\bibitem{HMP}
Ji-Young Ham, Alexander Mednykh, and Vladimir Petrov.
\newblock Trigonometric identities and volumes of the hyperbolic twist knot
  cone-manifolds.
\newblock {\em J. Knot Theory Ramifications}, 23(12):1450064, 16, 2014.

\bibitem{HLM3}
Hugh~M. Hilden, Mar{\'{\i}}a~Teresa Lozano, and Jos{\'e}~Mar{\'{\i}}a
  Montesinos-Amilibia.
\newblock On volumes and {C}hern-{S}imons invariants of geometric
  {$3$}-manifolds.
\newblock {\em J. Math. Sci. Univ. Tokyo}, 3(3):723--744, 1996.

\bibitem{HS}
Jim Hoste and Patrick~D. Shanahan.
\newblock A formula for the {A}-polynomial of twist knots.
\newblock {\em J. Knot Theory Ramifications}, 13(2):193--209, 2004.

\bibitem{LT}
Thang T.~Q. Le and Anh~T. Tran.
\newblock On the {AJ} conjecture for knots.
\newblock {\em Indiana Univ. Math. J.}, 64(4):1103--1151, 2015.
\newblock With an appendix written jointly with Vu Q. Huynh.

\bibitem{Mat}
Daniel~V. Mathews.
\newblock An explicit formula for the {A}-polynomial of twist knots.
\newblock {\em J. Knot Theory Ramifications}, 23(9), 2014.

\bibitem{Ni}
Yi~Ni and Xingru Zhang.
\newblock Detection of knots and a cabling formula for {A}-polynomials.
\newblock \url{arXiv:1411.0353}, 2014.
\newblock Preprint.

\bibitem{Pe}
Kathleen Petersen.
\newblock The {$A$}-polynomial of a family of two-bridge knots.
\newblock {\em New York J. Math.}, 21:847--881, 2015.

\bibitem{R1}
Robert Riley.
\newblock Parabolic representations of knot groups. {I}.
\newblock {\em Proc. London Math. Soc. (3)}, 24:217--242, 1972.

\bibitem{TY}
Naoko Tamura and Yoshiyuki Yokota.
\newblock A formula for the {$A$}-polynomials of {$(-2,3,1+2n)$}-pretzel knots.
\newblock {\em Tokyo J. Math.}, 27(1):263--273, 2004.

\bibitem{Z2}
Christian Zickert.
\newblock Ptolemy coordinates, {D}ehn invariant and the {$A$}-polynomial.
\newblock \url{arXiv:1405.0025}, 2014.
\newblock Preprint.
\end{thebibliography}
\end{document}